\pgfplotsset{compat=1.18} 
\newcolumntype{M}[1]{>{$\displaystyle}p{#1}<{$}}
\newtheorem{theorem}{Theorem}[section]
\newtheorem{lemma}[theorem]{Lemma}
\newtheorem{corollary}[theorem]{Corollary}
\theoremstyle{remark}
\newtheorem{definition}[theorem]{Definition}
\newtheorem*{example}{Example}
\newtheorem*{remark}{Remark}
\newcommand{\Cech}{\mathcal{C}}
\renewcommand{\d}{\;\textnormal{d}}
\newcommand{\N}{\mathbb{N}}
\newcommand{\E}{\mathbb{E}}
\newcommand{\R}{\mathbb{R}}
\newcommand{\U}{\mathcal{U}}
\newcommand{\Var}[1]{\textnormal{Var}\left({{#1}}\right)}
\newcommand{\Lap}[1]{\mathcal{L}\left\{{#1}\right\}}
\newcommand{\Rho}{\mathrm{P}}
\title{When Do Two Distributions Yield the Same Expected Euler Characteristic Curve in the Thermodynamic Limit?}
\author{Tobias Fleckenstein \thanks{University of Bonn} \and Niklas Hellmer\thanks{University of Warsaw and Polish Academy of Sciences, email: \texttt{nhellmer at impan.pl}}}
\date{\today}
\begin{document}

\maketitle
\begin{abstract}
    Let $F$ be a probability distribution on $\R^d$ which admits a bounded density.
    We investigate the Euler characteristic of the \v{C}ech complex on $n$ points sampled from $F$ i.i.d. as $n\to\infty$ in the thermodynamic limit regime.
    As a main result, we identify a condition for two probability distributions to yield the same expected Euler characteristic under this construction. 
    Namely, this happens if and only if their densities admit the same excess mass transform.
    Building on work of Bobrowski, we establish a connection between the limiting expected Euler characteristic of any such probability distribution $F$ and the one of the uniform distribution on $[0,1]^d$ through an integral transform. 
    Our approach relies on constructive proofs, offering explicit calculations of expected Euler characteristics in lower dimensions as well as reconstruction of a distribution from its limiting Euler characteristic. 
    In the context of topological data analysis, where the Euler characteristic serves as a summary of the shape of data, we address the inverse problem and determine what can be discriminated using this invariant.
    This research sheds light on the relationship between a probability distribution and topological properties of the \v{C}ech complex on its samples in the thermodynamic limit.
\end{abstract}

\section{Introduction}
Topological data analysis (TDA) \cite{edelsbrunnerComputationalTopologyIntroduction2010,wasserman_topological_2018} is a relatively young field of research, which aims to leverage tools from algebraic topology to study ``the shape of data".
One of its most prominent constructions is the \textit{\v{C}ech complex} $\Cech_r(X)$ of a finite point cloud $X\subset \R^d$, which has vertices $X$ and simplices $\sigma\subseteq X$ if the intersection $\bigcap_{x\in\sigma}\overline{B}_r(x)$ is non-empty.
Here, $r\geq 0$ is the filtration parameter, meaning that $\Cech_r(X)\subseteq \Cech_s(X)$ whenever $r\leq s$.
In this work, we are interested in the case when $X = X_n = \{x_1,\ldots,x_n\}$ consists of $n$ i.i.d. samples from some probability distribution $F$ on $\R^d$.
As the sample size $n$ goes to infinity, there are three limiting regimes governing the topology of $\Cech_{r_n}(X_n)$, which are distinguished by the behaviour of $\Lambda_n = n \omega_d r_n^d$.
Here, $\omega_d$ is the volume of a unit ball in $\R^d$ and $(r_n)_n$ is a sequence of parameters of the \v{C}ech complex.
In the dense regime, $\Lambda_n\to\infty$, the \v{C}ech complex is connected; if $\Lambda_n$ grows fast enough, it recovers the topology of the support of $F$ with high probability.
However, no other information about the distribution is kept.
In the thermodynamic regime, $\Lambda_n\to\Lambda \in ]0,\infty[$, on the other hand, we cannot recover the support of $F$ but can hope to capture different information about the distribution.
Finally, in the sparse regime, $\Lambda_n\to0$, the \v{C}ech complex is so disconnected that it retains not much information at all.

This raises the question what properties of the distribution are in fact captured by the topology of the \v{C}ech complex in the thermodynamic limit.
To this end, Vishwanath et al. \cite{vishwanath_limits_2022} have recently introduced the concept of ``$\mathcal{F}$-equivalence'', which provides a sufficient condition for probability distributions to have \v{C}ech complexes which are indistinguishable by means of topological invariants in this regime.
The main result of the present article is to show that this condition is indeed also necessary in the setting of expected Euler characteristic curves.
The two preceding statements can be succinctly combined into the following theorem:
\begin{theorem}
    Let $F,G$ be probability distributions on $\R^d$ with densities with respect to the Lebesgue measure $f,g$ which are bounded.
    The following are equivalent:
    \begin{enumerate}[i)]
        \item The excess mass transforms agree $\hat{f}(t)=\hat{g}(t)$ for all $t>0$,
        \item for any $X\sim F, Y\sim G$ we have $f(X) \overset{D}{=} g(Y)$,
        \item in the thermodynamic limit, the expectations of persistent Betti numbers agree: $\E[\beta_k^{s,t}(F)] = \E[\beta_k^{s,t}(G)]$ for all $k\in\N, 0<s<t$,
        \item in the thermodynamic limit, the expected Euler characteristic curves agree: $\overline{\chi}_F(\Lambda) = \overline{\chi}_G(\Lambda)$ for all $\Lambda>0$. 
    \end{enumerate}
\end{theorem}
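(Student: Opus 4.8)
The plan is to close the cycle $(i)\Leftrightarrow(ii)$ and $(ii)\Rightarrow(iii)\Rightarrow(iv)\Rightarrow(i)$. Of these, $(ii)\Rightarrow(iii)$ is precisely the sufficiency of $\mathcal{F}$-equivalence established by Vishwanath et al., so no new work is needed, and $(iii)\Rightarrow(iv)$ follows from the Euler--Poincaré relation: the limiting expected Euler characteristic at a scale is the signed sum of the limiting expected Betti numbers there, and these are read off from the expected persistent Betti numbers $\E[\beta_k^{s,t}]$ by continuity up to the diagonal $s=t$ (the countably many critical scales being irrelevant since the limiting curve is continuous in $\Lambda$). The genuinely new content is $(iv)\Rightarrow(i)$; as a byproduct, $(i)\Rightarrow(iv)$ will also fall out of the explicit formula below.

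I would treat $(i)\Leftrightarrow(ii)$ first, as a self-contained computation. With $\mu_f(t):=|\{f>t\}|$ the Lebesgue measure of the superlevel set, Fubini gives the identities $\hat f(t)=\int_t^\infty\mu_f(s)\d s$ and, for $X\sim F$, $\P(f(X)>t)=t\,\mu_f(t)+\hat f(t)$. The first shows that $\hat f$ and $\mu_f$ determine one another; substituting $\mu_f=-\hat f'$ into the second and solving the resulting first-order relation (with $\hat f(t)\to 0$ as $t\to\infty$) recovers $\hat f$, hence $\mu_f$, from the law of $f(X)$. Thus equality of excess mass transforms, equality of superlevel-set measures, and equality of the laws of $f(X)$ and $g(Y)$ are all equivalent.

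For $(iv)\Rightarrow(i)$, I would first establish, building on Bobrowski, a closed form for $\overline{\chi}_F$. Counting $k$-simplices of $\Cech_{r_n}(X_n)$ as $(k+1)$-tuples whose smallest enclosing ball has radius $\le r_n$, rescaling around the first vertex, and using boundedness of $f$ together with dominated convergence, the normalized expected number of $k$-simplices converges to $c_k^{(d)}\Lambda^k\int_{\R^d}f^{k+1}$, with $c_k^{(d)}=C_k/((k+1)!\,\omega_d^{\,k})>0$ and $C_k<\infty$ the volume of the configurations $(y_1,\dots,y_k)$ for which $0,y_1,\dots,y_k$ lie in one unit ball. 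Summing with signs and interchanging $\lim_n$ with the sum over $k$ -- legitimate since $C_k\le(2^d\omega_d)^k$ and $\int f^{k+1}\le\|f\|_\infty^{\,k}$ give a uniformly summable majorant -- yields
\[
\overline{\chi}_F(\Lambda)=\sum_{k\ge 0}(-1)^k c_k^{(d)}\Lambda^k\int_{\R^d}f(x)^{k+1}\d x=\E\big[\psi_d(\Lambda f(X))\big],\qquad \psi_d(\lambda):=\sum_{k\ge 0}(-1)^k c_k^{(d)}\lambda^k,
\]
where $\psi_d$ is the same curve for the uniform law on $[0,1]^d$; this is the integral transform announced in the abstract (for $d=1$ one checks $C_k=(k+1)2^k$, so $\psi_1(\lambda)=e^{-\lambda}$ and the transform is a Laplace transform). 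The majorant makes $\overline{\chi}_F$ the restriction of an entire function of $\Lambda$, so $\overline{\chi}_F\equiv\overline{\chi}_G$ on $]0,\infty[$ forces equality of all Taylor coefficients; since $c_k^{(d)}\neq 0$ we get $\int f^{k+1}=\int g^{k+1}$ for every $k$, i.e.\ $\E[f(X)^j]=\E[g(Y)^j]$ for all $j$. Boundedness of $f(X)$ and $g(Y)$ makes the moment problem determinate, so $f(X)\overset{D}{=}g(Y)$, which is $(ii)$, hence $(i)$.

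The main obstacle I anticipate is the rigorous derivation of the closed form: controlling the error in $\prod_i f(x_i)\to f(x)^{k+1}$ strongly enough to push it through the prefactor $\binom{n}{k+1}r_n^{dk}$, and -- the more delicate point -- justifying the exchange of $\lim_n$ with the infinite alternating sum over simplex dimensions, which is exactly where the boundedness hypothesis is used in an essential way. Once the formula is in place, the remaining analytic input (identity theorem for power series, determinacy of a compactly supported moment sequence) is routine.
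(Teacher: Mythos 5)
Your proposal is correct in its essential content, and for the one implication that constitutes the paper's actual contribution, $iv)\Rightarrow i)$, it takes a genuinely different route. The paper first proves the integral transform $\overline{\chi}_F(\Lambda)=-\int_0^{\|f\|_\infty}\hat{f}'(y)\,\overline{\chi}_{\U^d}(\Lambda y)\d y$ (by rearranging Bobrowski--Mukherjee's Morse-theoretic formula), then uses $\overline{\chi}_{\U^d}(\Lambda)=e^{-\Lambda}P(\Lambda)$ to convert this identity into a $d$-th order linear ODE for $\Lap{\hat{f}'}$ with initial data computable from $\overline{\chi}_F^{(k)}(0)$, and concludes by uniqueness of solutions plus injectivity of the Laplace transform. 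You instead expand $\overline{\chi}_F$ as an entire power series $\sum_k(-1)^k c_k^{(d)}\Lambda^k\,\E[f(X)^k]$ with universal constants $c_k^{(d)}>0$, apply the identity theorem to match coefficients, and finish with determinacy of the moment problem for the bounded random variables $f(X), g(Y)$ --- landing on $ii)$ rather than $i)$ directly, which is equivalent since $\hat f(t)=\P(f(X)\ge t)$. Your route buys a more elementary endgame (no Laplace inversion, and no appeal to Picard--Lindel\"of for an equation whose leading coefficient $\Lambda^d$ degenerates at the initial point $\Lambda=0$), and your coefficients are manifestly nonzero since $C_k$ is the volume of a nonempty open set; the sanity check $C_k=(k+1)2^k$, hence $\psi_1=e^{-\lambda}$, is right. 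The cost is that you must re-derive the simplex-count asymptotics from scratch --- the convergence $\prod_i f(x+r_ny_i)\to f(x)^{k+1}$ under the integral for merely bounded measurable $f$ needs the Lebesgue-point/translation-continuity argument as in Penrose, and the exchange of $\lim_n$ with the alternating sum needs your majorant to hold uniformly in $n$, which it does because $n r_n^d$ is bounded --- whereas the paper outsources all of this to the cited Morse-theoretic limit theorem.

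One small correction to your $i)\Leftrightarrow ii)$ step: the identity $\hat f(t)=\int_t^\infty\mu_f(s)\d s$ is false with the paper's definition (take $f=\mathbbm{1}_{[0,1]}$: then $\hat f(1/2)=1$ while the integral equals $1/2$); the layer-cake computation gives $\hat f(t)=t\,|\{f\ge t\}|+\int_t^\infty\mu_f(s)\d s$. This does not affect your conclusion, since the equivalence is immediate from $\hat f(t)=\int\mathbbm{1}_{[t,\infty[}(f(x))\d F(x)=\P(f(X)\ge t)$, i.e.\ $1-\hat f$ is exactly the distribution function of $f(X)$, as the paper notes.
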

The implications $i)\Rightarrow ii)\Rightarrow iii)\Rightarrow iv)$ were established by Vishwanath et al. \cite{vishwanath_limits_2022}, condition \textit{i)} is their notion of ``$\mathcal{F}$-equivalence''.
The subject of the present work is to show the perhaps surprising implication $iv)\Rightarrow i)$.
This is Theorem \ref{thm:fibre} below.

Let us briefly collect some related work.
In the context of the advent of TDA, there has been considerable effort to understand random geometric complexes \cite{kahleRandomGeometricComplexes2011, bobrowskiTopologyRandomGeometric2017}, generalizing the theory of random geometric graphs \cite{penroseRandomGeometricGraphs2003}.
The key idea of TDA is to study the changes of topological invariants when varying this parameter, a concept known as \textit{persistence}.
Thus, the numerical invariant of the Euler characteristic becomes a function of one non-negative real parameter; this is the Euler characteristic curve (ECC), the corresponding algebraic invariant is persistent homology.
TDA follows the slogan that ``data has shape'', but data of course also has a density. 
In this article, we address the question what the shape of the data encodes about its density.
While in the context of TDA, the ECC is used as a functional summary of the data, we are interested in the inverse problem:
Given an ECC, what can we know about the probability distribution governing the data?
We establish means to explicitly compute a possible probability density from the limiting expected ECC.

Pioneering the study of the Euler characteristic of random \v{C}ech complexes was Bobrowski's insight to exploit Morse-theoretic ideas \cite{bobrowski_algebraic_2012, bobrowski_distance_2014}.
Functional laws of large numbers for the ECC were recently presented in \cite{thomas_FSLLN_2021} and \cite{thomasFunctionalLimitTheorems2020b}, which also provides a functional central limit theorem.
This was later extended by \cite{Krebs2021OnAT} and applied to goodness of fit testing \cite{dlotko_topology-driven_2023}.
One major motivation for the present article is the question: Against which distributions does the test \cite{dlotko_topology-driven_2023} have power?
A different aspect of ECCs in a statistical context is its links to percolation theory \cite{bobrowski_homological_2020}.

Another topological invariant is given by Betti numbers, which extend the notion of connectivity to higher dimensions.
They are closely related to the Euler characteristic, which is expressed as the alternating sum of Betti numbers.
In the setting of random geometric complexes, Betti numbers were studied initially by
\cite{kahleRandomGeometricComplexes2011}, then limit theorems and a law of large numbers were established by \cite{yogeshwaran_random_2017} and later strengthened by \cite{goel_strong_2019}.
Of course, these results imply statements about the Euler characteristic via taking the alternating sum.
However, there are more tools available for the Euler characteristic than for Betti numbers, allowing for example more explicit expressions for the limit expectation \cite{bobrowskiTopologyRandomGeometric2017}.

\section{Background}
Let $F$ be a probability distribution on $\R^d$ which admits a density $f\colon\R^d\to\R$ with respect to the Lebesgue measure.
Throughout, we assume it is bounded, i.e. $\|f\|_\infty <\infty$.
\begin{definition}\label{def:ExcessMass}
    We define the \textit{excess mass transform} of a probability density $f\colon \R^d \to [0,\infty[$ as
\begin{equation}\label{eqn:ExcessMass}
    \hat{f}(t) = \int_{\R^d} \mathbbm{1}_{[t,\infty[}(f(x)) \, f(x) \d x.
\end{equation}    
\end{definition}

It is easy to see that the function $1-\hat{f}$ is the distribution function of the random variable $f(X)$ where $X\sim F$.
Note that our definition is slightly different from M\"uller \& Sawitzki \cite{muller_excess_1991} and Polonik \cite{polonik_measuring_1995}.
See Figure~\ref{fig:ExcessMass} for an illustration.
We shall consider the derivative $\hat{f}'$ in a distributional which is defined via integration by parts; in particular, we can make sense of the Laplace transform \cite{beals_advanced_1973}:
\[
    \Lap{\hat{f}'}(\Lambda) = \int_0^\infty \hat{f}'(y) e^{-\Lambda y}\d y = (0-1)+\Lambda \int_0^\infty  \hat{f}(y) e^{-\Lambda y} \d y = -1+\Lambda\Lap{\hat{f}}(\Lambda).
\]

\begin{figure}
    \centering
    \begin{tikzpicture}[domain=0:8,xscale=1,yscale=3]
        \draw[<->] (0,0.5) node[left]{$y$}-- (0,0) -- (8.5,0) node[below] {$x$};
        \draw[domain=0:8,smooth,red] plot (\x, {1/(sqrt(2*pi))*exp(-(\x-4)^2/(2))}) node[above] {$f(x)$};
        \draw[domain={4-sqrt(1/2)}:{4+sqrt(1/2)}, smooth, variable=\x, fill=red!50, draw=none] plot ({\x},{1/(sqrt(2*pi))*exp(-(\x-4)^2/(2))}) -- ({4+sqrt(1/2)},0) -- ({4-sqrt(1/2)},0);
        \draw[blue] plot [smooth] coordinates {(0,{1/sqrt(2*pi)})  ({0.112463},{exp(-1/100)/sqrt(2*pi)})  ({0.24817},{exp(-1/20)/sqrt(2*pi)})  ({0.382925},{exp(-1/8)/sqrt(2*pi)}) ({0.436279},{exp(-1/6)/sqrt(2*pi)})  ({0.5202},{exp(-1/4)/sqrt(2*pi)})  ({0.682869},{exp(-1/2)/sqrt(2*pi)}) (0.842701,{exp(-1)/sqrt(2*pi)})  (0.9545,{exp(-2)/sqrt(2*pi)})  (1,0)};
        \draw[dashed] (0,{exp(-1/4)/sqrt(2*pi)}) -- ({4+sqrt(1/2)},{exp(-1/4)/sqrt(2*pi)});
        \draw[dotted] ({0.5202},0) -- ({0.5202},{exp(-1/4)/sqrt(2*pi)});
        \draw ({0.5202},0.02)--({0.5202},-0.02);
        \draw (1,0.02)--(1,-0.02);
        \node[left] at (0,{exp(-1/4)/sqrt(2*pi)}) {$t$} ;
        \node[below] at (0.5,0) {$\hat{f}(t)$} ;
        \node[below] at (1,-0.03) {{$1$}};
        \node[red] at ({4},0.1) {$\hat{f}(t)$} ;
        \node[blue] at (0.5,0.5) {$\hat{f}(y)$} ;

    \end{tikzpicture}
    \caption{Illustration of a density (whose domain is the horizontal axis) and its excess mass, which is defined on the vertical axis and takes values on the horizontal axis.\label{fig:ExcessMass}}
\end{figure}
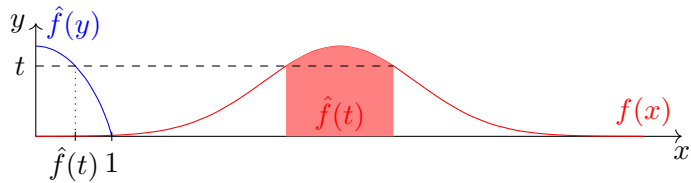

    We are interested in sampling more and more points from $F$, this can be done in the \textit{Bernoulli} or in the \textit{Poisson} setting.
    The former means that we sample $n$ points i.i.d. from $F$.
    The latter means that the sample was generated by a Poisson point process of intensity $nf$.
    In either case, we denote the resulting point cloud by $X_n$.
    Given such a point sample $X_n$, we study the union of closed euclidean balls $\mathcal{O}_{r_n}(X_n) = \bigcup\limits_{x\in X_n} \overline{B}_{r_n}(x)$.
    As we let $n\to \infty$, we consider a sequence of shrinking radii $r_n$ such that $n\omega_d r_n^d \to \Lambda$, where $\omega_d$ is the volume of the $d$-dimensional unit ball.
    Intuitively, $\Lambda$ is the total volume of the collection of balls.
    The case $0<\Lambda<\infty$ is called the \textit{thermodynamic} or \textit{critical regime}; $\Lambda=0$ is the \textit{sparse} and $\Lambda=\infty$ is the \textit{dense} regime.
    We are interested in the topology of the union of balls in the thermodynamic limit regime.
    Specifically, we investigate the \textit{Euler characteristic}.
    This is a topological invariant which can be defined in several ways.
    It is convenient to replace $\mathcal{O}_{r_n}(X_n)$ by an equivalent combinatorial construction, namely the \textit{\v{C}ech complex} $\mathcal{C}_{r_n}(X_n)$.
    This is a geometric simplicial complex, i.e. a collection of vertices, edges, triangles, tetrahedra and so on.
    Thus, it is a generalization of geometric graphs.
    Specifically, the \v{C}ech complex has vertex set $X_n$ and we include a $k$-simplex $\sigma\subseteq X_n$ iff $\bigcap_{x\in \sigma} \overline{B}_{r_n}(x) \neq \emptyset$.
    See Figure \ref{fig:Cech} for an illustration.
    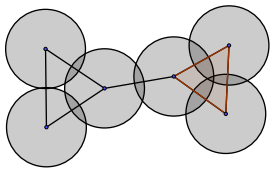
\begin{figure}
    \definecolor{zzttqq}{rgb}{0.6,0.2,0}
	\definecolor{ududff}{rgb}{0.30196078431372547,0.30196078431372547,1}
	\definecolor{bcduew}{rgb}{0.7372549019607844,0.8313725490196079,0.9019607843137255}
	\definecolor{aqaqaq}{rgb}{0.6274509803921569,0.6274509803921569,0.6274509803921569}
        \centering
        {\scalebox{0.25}{\begin{tikzpicture}[line cap=round,line join=round,>=triangle 45,x=1cm,y=1cm]
\clip(-12.616640864077574,-9.17579630531208) rectangle (12.698871070317466,8.967939674756556);
\draw [line width=2pt,fill=black,fill opacity=0.2] (-4.3455,1.80544) circle (2.1cm);
\draw [line width=2pt,fill=black,fill opacity=0.2] (-4.2993772823371055,-2.3460119162021873) circle (2.1cm);
\draw [line width=2pt,fill=black,fill opacity=0.2] (-1.2396040745723542,-0.28566211800883773) circle (2.1cm);
\draw [line width=2pt,fill=black,fill opacity=0.2] (2.404447434172601,0.34474341725927676) circle (2.1cm);
\draw [line width=2pt,fill=black,fill opacity=0.2] (5.310463194310983,1.989948106861429) circle (2.1cm);
\draw [line width=2pt,fill=black,fill opacity=0.2] (5.141330001921974,-1.6387276571208862) circle (2.1cm);
\fill[line width=2pt,color=zzttqq,fill=zzttqq,fill opacity=0.10000000149011612] (5.310463194310983,1.989948106861429) -- (5.141330001921974,-1.6387276571208862) -- (2.404447434172601,0.34474341725927676) -- cycle;
\draw [line width=2pt] (-4.3455,1.80544)-- (-4.2993772823371055,-2.3460119162021873);
\draw [line width=2pt] (-4.2993772823371055,-2.3460119162021873)-- (-1.2396040745723542,-0.28566211800883773);
\draw [line width=2pt] (-4.3455,1.80544)-- (-1.2396040745723542,-0.28566211800883773);
\draw [line width=2pt] (-1.2396040745723542,-0.28566211800883773)-- (2.404447434172601,0.34474341725927676);
\draw [line width=2pt] (2.404447434172601,0.34474341725927676)-- (5.310463194310983,1.989948106861429);
\draw [line width=2pt] (5.310463194310983,1.989948106861429)-- (5.141330001921974,-1.6387276571208862);
\draw [line width=2pt] (5.141330001921974,-1.6387276571208862)-- (2.404447434172601,0.34474341725927676);
\draw [line width=2pt,color=zzttqq] (5.310463194310983,1.989948106861429)-- (5.141330001921974,-1.6387276571208862);
\draw [line width=2pt,color=zzttqq] (5.141330001921974,-1.6387276571208862)-- (2.404447434172601,0.34474341725927676);
\draw [line width=2pt,color=zzttqq] (2.404447434172601,0.34474341725927676)-- (5.310463194310983,1.989948106861429);
\begin{scriptsize}
\draw [fill=ududff] (-4.3455,1.80544) circle (2.5pt);
\draw [fill=ududff] (-4.2993772823371055,-2.3460119162021873) circle (2.5pt);
\draw [fill=ududff] (-1.2396040745723542,-0.28566211800883773) circle (2.5pt);
\draw [fill=ududff] (2.404447434172601,0.34474341725927676) circle (2.5pt);
\draw [fill=ududff] (5.310463194310983,1.989948106861429) circle (2.5pt);
\draw [fill=ududff] (5.141330001921974,-1.6387276571208862) circle (2.5pt);
\end{scriptsize}
\end{tikzpicture}}}
        \caption{The \v{C}ech complex on a sample of six points captures the topology of the union of balls.}
        \label{fig:Cech}
    \end{figure}

    \begin{definition}\label{def:EulerCharacteristic}
        The \textit{Euler characteristic} of the \v{C}ech complex is
        \[
            \chi(\mathcal{C}_{r_n}(X_n)) = \sum\limits_{\sigma\in \mathcal{C}_{r_n}(X_n)} (-1)^{|\sigma|-1},
        \]
        where $|\cdot|$ denotes the cardinality of a set.
    \end{definition}
    The behaviour of the Euler characteristic as $n\to \infty$ was  studied by Bobrowski in his PhD thesis \cite{bobrowski_algebraic_2012} using Morse-theoretic ideas and has a long tradition in stochastic geometry \cite{chiu_stochastic_2013}.
    \begin{definition}\label{def:EECC}
         Let $\overline{\chi}_{F} \colon [0,\infty[ \to \R$ be the function
    \[
        \overline{\chi}_{F}(\Lambda) = \begin{cases}
            1 & \text{if } \Lambda = 0,\\
            \lim\limits_{n\to\infty} n^{-1} \E[\chi(\mathcal{C}_{r_n}(X_n))]&\text{otherwise,}
        \end{cases}        
    \]
    where $n\omega_d r_n\to \Lambda\in]0,\infty[$ as $n\to\infty$.
        We call the function $\overline{\chi}_F$ the \textit{expected Euler characteristic curve}, or \textit{EECC} for short.
    \end{definition}
    The goal of this article is to identify the fibre of the map $F\mapsto \overline{\chi}_F$, which will be done in Theorem \ref{thm:fibre}.
    
    Bobrowski presented a first version of the following result in his thesis \cite{bobrowski_algebraic_2012} and extended it in subsequent work with Mukherjee \cite{bobrowski_topology_2013} to the general setting of manifolds:
    \begin{theorem}[\cite{bobrowski_topology_2013}, Theorem 4.4 and Corollary 4.5]\label{thm:gamma_k}
        Let $f\colon \R^d\to\R$ be a bounded probability density. 
        In the thermodynamic limit,
        \[
            \lim\limits_{n\to\infty} n^{-1} \E[\chi_{n,f}(\Lambda)] = 1 + \sum\limits_{k=1}^{d} (-1)^k \gamma^f_k(\Lambda),
        \]
        where
        \begin{equation}\label{eqn:gamma_k}
            \gamma^f_k(\Lambda) = \frac{\Lambda^k}{\omega_d^{k}(k+1)!} \int_{\R^d}\int_{(\R^d)^k} f^{k+1}(x) h_1^c(0,y)e^{-\Lambda R^d(0,y)f(x)} \d y \d x.
        \end{equation}
    \end{theorem}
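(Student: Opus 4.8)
The plan is to follow Bobrowski's Morse-theoretic strategy and count critical points of a distance function rather than simplices. For $X_n$ in general position --- which holds almost surely because $F$ is absolutely continuous --- the min-type distance function $d_{X_n}(y)=\min_{x\in X_n}\lVert y-x\rVert$ is a Morse function whose sublevel set at height $r$ is $\mathcal{O}_{r}(X_n)$; by the Nerve Lemma $\chi(\Cech_r(X_n))=\chi(\mathcal{O}_r(X_n))=\sum_{k\ge 0}(-1)^k N_k(r)$, where $N_k(r)$ is the number of index-$k$ critical points of $d_{X_n}$ with critical value at most $r$. The index-$0$ critical points are exactly the sample points, so $N_0(r_n)=n$, producing the leading $1$ after division by $n$. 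For $k\ge 1$, an index-$k$ critical point is generated by a $(k+1)$-subset $\mathcal{Y}\subseteq X_n$ of affinely independent points whose smallest enclosing ball $B(\mathcal{Y})$ has its centre in the interior of $\mathrm{conv}(\mathcal{Y})$ and contains no other point of $X_n$; the first two conditions, together with the requirement that the critical value (the circumradius $R(\mathcal{Y})$) not exceed the parameter, are what the indicator $h^c$ encodes, while the last is an ``empty ball'' condition. Since $k+1$ affinely independent points force $k\le d$, the alternating sum terminates at $k=d$.

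Second, I would compute $\E[N_k(r_n)]$ for $k\ge 1$. In the Poisson model of intensity $nf$ the multivariate Mecke formula yields
\[
\E[N_k(r_n)]=\frac{n^{k+1}}{(k+1)!}\int_{(\R^d)^{k+1}}h^c_{r_n}(x_0,\dots,x_k)\exp\!\Big(-n\!\int_{B(x_0,\dots,x_k)}\!\!f\Big)\prod_{i=0}^{k}f(x_i)\,\d x_i ,
\]
the exponential being the void probability of the enclosing ball. Substituting $x_i=x_0+r_ny_i$ for $i=1,\dots,k$ (Jacobian $r_n^{dk}$), using translation- and scale-invariance of the critical-configuration indicator so that $h^c_{r_n}(x_0,x_0+r_ny_1,\dots)=h^c_1(0,y)$, and $\mathrm{vol}\,B=\omega_d r_n^{d}R^d(0,y)$, the quantity $n^{-1}\E[N_k(r_n)]$ becomes the integral over $\R^d\times(\R^d)^k$ of
\[
\frac{(nr_n^{d})^{k}}{(k+1)!}\,h^c_1(0,y)\exp\!\Big(-n\!\int_{B(x_0,x_0+r_ny)}\!\!f\Big)f(x_0)\prod_{i=1}^{k}f(x_0+r_ny_i).
\]
Since $nr_n^{d}\to\Lambda/\omega_d$ and, at Lebesgue points $x_0$ of $f$ (where $\lVert f\rVert_\infty<\infty$ enters), $n\!\int_{B(x_0,x_0+r_ny)}f\to\Lambda R^d(0,y)f(x_0)$ and $\prod_i f(x_0+r_ny_i)\to f(x_0)^k$ in the relevant averaged sense, the integrand converges pointwise to $\frac{(\Lambda/\omega_d)^k}{(k+1)!}h^c_1(0,y)e^{-\Lambda R^d(0,y)f(x_0)}f^{k+1}(x_0)$, i.e.\ to the integrand of $\gamma^f_k(\Lambda)$.

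Third, I would justify the limit interchanges. The indicator $h^c_1(0,\cdot)$ is supported on a bounded set $K\subset(\R^d)^k$ (a unit circumradius forces $\lVert y_i\rVert\le 2$), and the void-probability factor is $\le 1$, so the rescaled integrand is bounded, uniformly in large $n$, by $C\lVert f\rVert_\infty^{k}f(x_0)\,\mathbbm{1}_{K}(y)$, which is integrable; dominated convergence then gives $n^{-1}\E[N_k(r_n)]\to\gamma^f_k(\Lambda)$, and since the outer sum $\sum_{k=0}^{d}(-1)^k$ is finite the formula follows. The transfer from the Poisson to the Bernoulli model is standard: the factorial-moment identity for $n$ i.i.d.\ points replaces $n^{k+1}\prod_i f(x_i)$ by $n(n-1)\cdots(n-k)\prod_i f(x_i)$ and the void probability by $(1-\int_B f)^{\,n-k-1}$, and since $\int_B f=O(r_n^d)\to 0$ and $(1-t/n)^n\to e^{-t}$ uniformly on bounded $t$-intervals, the two limits agree.

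The step I expect to be the main obstacle is this last passage to the limit inside the integral: one must exhibit an $n$-uniform integrable majorant after rescaling, which hinges both on the compact support of $h^c_1(0,\cdot)$ coming from the circumradius cutoff and on $f$ being bounded (so that $\int f^{k+1}<\infty$), and one must make the replacement of $\prod_i f(x_0+r_ny_i)$ by $f(x_0)^k$ rigorous for a merely bounded density, typically via an $L^1$-continuity-of-translations argument or a continuous approximation. The remaining technical ingredient --- that for absolutely continuous $F$ the function $d_{X_n}$ is almost surely Morse with critical points of the stated combinatorial type, and that there are none of index exceeding $d$ --- is handled by the genericity arguments going back to Bobrowski's thesis.
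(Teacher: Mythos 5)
This theorem is quoted in the paper as an external result (Bobrowski--Mukherjee, Theorem 4.4 and Corollary 4.5) and no proof is given there, so there is nothing internal to compare against. Your outline correctly reconstructs the argument of the cited source: the Morse-theoretic identification of $\chi$ with the alternating count of critical points of the min-type distance function, the Mecke/Palm computation of $\E[N_k(r_n)]$ in the Poisson model, the rescaling $x_i = x_0 + r_n y_i$ with the compactly supported indicator $h_1^c$ furnishing the dominating function, and the standard de-Poissonization; the points you flag as delicate (the Lebesgue-point replacement of $\prod_i f(x_0+r_n y_i)$ by $f(x_0)^k$ and the genericity of the critical configurations) are exactly the ones handled in Bobrowski's thesis.
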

    We shall not need the definitions of $h_1^c$ and $R(0,y)$, which can be found in \cite{bobrowski_algebraic_2012}.
Bobrowski and Mukherjee provide explicit formulas for $\gamma_k$ for uniform distributions in dimension up to 3.
In general, the EECC of a uniform distribution is of the form $\overline{\chi}_{\mathcal{U}^d} = e^{-\Lambda} P(\Lambda)$, for a certain polynomial $P(\Lambda) = \sum_{i=0}^d p_i \Lambda^i$ with $p_0=1$ \cite[Corollary 6.2]{bobrowski_vanishing_2017}.
For $d=1,2,3$, they are known explicitly \cite{meckeEulerCharacteristicRelated1991}:
\begin{align*}
    \overline{\chi}_{\mathcal{U}^1}(\Lambda) & = e^{-\Lambda}                                                        \\
    \overline{\chi}_{\mathcal{U}^2}(\Lambda) & = e^{-\Lambda}(1-\Lambda)                                             \\
    \overline{\chi}_{\mathcal{U}^3}(\Lambda) & = e^{-\Lambda} \left(1-3\Lambda + \frac{3\pi^2}{32} \Lambda^2\right).
\end{align*}

If one replaces Euclidean by a more general $p$-distance, analogous results to Theorem~\ref{thm:gamma_k} were established in \cite[Theorem 4.3.1]{thomas_stochastic_2021}. Formulas of the limit expectation for the uniform distribution are provided only for $p=\infty$ in terms of Touchard polynomials \cite[Corollary 4.3.3]{thomas_stochastic_2021}.%
\section{An Integral Transform Formula}
Throughout, we let $F$ be a probability distribution on $\R^d$ which admits a density $f$ with respect to the Lebesgue measure.
Before we state our theorem, we give some intuitive heuristic motivating it.
Consider a small volume element $A$ around a point $x\in \R^d$.
For a sample of sufficiently large size $n$, the relative amount of points falling into $A$ is roughly $vol(A) f(x)$.
If we choose $A$ small enough, we can replace $f$ by its average value on $A$.
We expect $vol(A) f(x)$ times as many points as from a uniform sample in $A$.
Therefore, also the total volume of the union of balls gets scaled by $f(x)$.
In the thermodynamic limit, we can ignore the effects of points outside $A$.
Then the local contribution of our small region to the EECC $\overline{\chi}_{F}(\Lambda)$ is consequently $f(x) \overline{\chi}_{\mathcal{U}^d}(\Lambda f(x)) vol(A)$.
Letting $A$ become infinitesimally small and integrating over all local contributions now recovers the EECC:

\begin{theorem}\label{thm:density-formulas}
    Let $f\colon \R^d \to \R$ be a bounded probability density.
    Then we have the following formula for the expected ECC in the thermodynamic limit:
    \begin{equation}
        \overline{\chi}_F= \int\limits_{\R^d} f(x) \overline{\chi}_{\U^d}(\Lambda f(x)) \d x .
    \end{equation}
    In addition, we have
    \begin{equation}\label{eqn:EECC-integral-transform}
        \overline{\chi}_F= -\int\limits_{0}^{\|f\|_\infty} \hat{f}'(y) \, \overline{\chi}_{\U^d}(\Lambda y) \d y,
    \end{equation}
    where $\hat{f}'$ is the derivative of the excess mass function, which can be understood in a distributional sense. 
\end{theorem}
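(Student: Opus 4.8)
The plan is to deduce both identities directly from the explicit Bobrowski--Mukherjee formula of Theorem~\ref{thm:gamma_k}; the heuristic sketched above will be made rigorous by exploiting the precise way in which $\Lambda$ and the value $f(x)$ enter the integrand of $\gamma^f_k$. The whole argument splits into a scaling identity for the $\gamma_k$'s, a summation step, and a pushforward step.

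\textbf{Step 1: a scaling identity for $\gamma_k$.} First I would specialize \eqref{eqn:gamma_k} to the density $\mathbbm{1}_{[0,1]^d}$ of $\U^d$: since $(\mathbbm{1}_{[0,1]^d})^{k+1}=\mathbbm{1}_{[0,1]^d}$ and the remaining integrand does not depend on $x$ on the support, the $x$-integral collapses and one gets
\[
  \gamma^{\U^d}_k(\mu) = \frac{\mu^k}{\omega_d^k (k+1)!}\int_{(\R^d)^k} h_1^c(0,y)\, e^{-\mu R^d(0,y)}\d y .
\]
The key algebraic observation is that in \eqref{eqn:gamma_k} the parameter $\Lambda$ and the value $f(x)$ appear only through the product $\Lambda f(x)$ in the exponential, while the prefactor obeys the identity $\Lambda^k f^{k+1}(x) = f(x)\,\bigl(\Lambda f(x)\bigr)^k$. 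Substituting this into \eqref{eqn:gamma_k}, interchanging the order of integration and performing the $y$-integral for each fixed $x$, one obtains
\[
  \gamma^f_k(\Lambda) = \int_{\R^d} f(x)\,\gamma^{\U^d}_k\!\bigl(\Lambda f(x)\bigr)\d x ,
\]
where the set $\{f=0\}$ contributes nothing, consistently with $\gamma^{\U^d}_k(0)=0$ for $k\ge 1$. Finiteness and the interchange are immediate because $\gamma^{\U^d}_k$ is bounded on the compact interval $[0,\Lambda\|f\|_\infty]$ and $f\in L^1$.

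\textbf{Step 2: the first formula.} Next I would sum over $k$, using Theorem~\ref{thm:gamma_k} for both $F$ and $\U^d$ together with $\int_{\R^d} f(x)\d x = 1$:
\[
  \overline{\chi}_F(\Lambda) = 1 + \sum_{k=1}^{d}(-1)^k\!\int_{\R^d} f(x)\,\gamma^{\U^d}_k(\Lambda f(x))\d x = \int_{\R^d} f(x)\Bigl[\,1 + \sum_{k=1}^{d}(-1)^k\gamma^{\U^d}_k(\Lambda f(x))\Bigr]\d x = \int_{\R^d} f(x)\,\overline{\chi}_{\U^d}(\Lambda f(x))\d x ,
\]
which is the first claimed identity.

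\textbf{Step 3: passing to the excess mass transform.} For \eqref{eqn:EECC-integral-transform} I would push the integral forward along $y = f(x)$. Since $X\sim F$ has density $f$, for every bounded measurable $\psi$ one has $\int_{\R^d}\psi(f(x))f(x)\d x = \E[\psi(f(X))]$; applying this with $\psi(y)=\overline{\chi}_{\U^d}(\Lambda y)$ gives $\overline{\chi}_F(\Lambda) = \E[\overline{\chi}_{\U^d}(\Lambda f(X))]$. As noted after Definition~\ref{def:ExcessMass}, $1-\hat{f}$ is the distribution function of $f(X)$, so the law of $f(X)$ is the non-negative measure $-\hat{f}'\d y$, supported in $[0,\|f\|_\infty]$; moreover $\P(f(X)=0)=0$, so there is no atom at the lower endpoint. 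Hence
\[
  \overline{\chi}_F(\Lambda) = \int_{[0,\|f\|_\infty]}\overline{\chi}_{\U^d}(\Lambda y)\,\bigl(-\hat{f}'(y)\bigr)\d y = -\int_0^{\|f\|_\infty}\hat{f}'(y)\,\overline{\chi}_{\U^d}(\Lambda y)\d y ,
\]
the integral being understood as a Riemann--Stieltjes integral against $\d\hat{f}$, equivalently via the integration-by-parts definition of $\hat{f}'$ used in the Background section.

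\textbf{Main obstacle.} I expect the only genuine subtlety to lie in Step~1: one must check that Theorem~\ref{thm:gamma_k} is legitimately invoked for the bounded but discontinuous density $\mathbbm{1}_{[0,1]^d}$, and that the rescaled integrand is jointly integrable so that the change of order of integration is valid uniformly in $x$; once this is in place, the identity $\Lambda^k f^{k+1}(x)=f(x)\bigl(\Lambda f(x)\bigr)^k$ does all the work. Step~2 is pure bookkeeping, and Step~3 is routine once one is comfortable treating $\hat{f}'$ distributionally as the law of $f(X)$.
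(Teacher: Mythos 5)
Your proposal is correct. Steps 1 and 2 are exactly the paper's argument: the authors also exploit the identity $\Lambda^k f^{k+1}(x)=f(x)\bigl(\Lambda f(x)\bigr)^k$ in \eqref{eqn:gamma_k}, insert a dummy integral over $[0,1]^d$ to recognize $\gamma_k^{\U^d}(\Lambda f(x))$, and then take the alternating sum; your worry about invoking Theorem~\ref{thm:gamma_k} for the indicator density is moot, since all one needs is the \emph{definition} of $\gamma_k^{\U^d}$ as the $y$-integral with the $x$-integral collapsed, which is precisely what the dummy-integral trick supplies. Where you genuinely diverge is Step 3: the paper derives \eqref{eqn:EECC-integral-transform} by writing $\overline{\chi}_{\U^d}(\Lambda f(x))$ as $1$ plus an integral of its derivative, applying Fubini to produce $\hat f(y)$ via $\int f(x)\mathbbm{1}_{f(x)\ge y}\d x$, and then integrating by parts a second time to move the derivative back onto $\hat f$; you instead recognize $\int f(x)\,\psi(f(x))\d x$ as $\E[\psi(f(X))]$ and integrate against the law of $f(X)$, which is the Lebesgue--Stieltjes measure $-\d\hat f$. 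The two computations are equivalent, but your route is arguably cleaner at the endpoints: the paper's final step uses $\hat f(\|f\|_\infty)=0$, which fails verbatim when $f$ attains its maximum on a set of positive measure (e.g.\ for $\U^d$ itself, where $\hat f(1)=1$) and must be repaired by reading $\hat f'$ as containing a Dirac mass at $\|f\|_\infty$ and the integral as extending over the closed interval; your measure-theoretic formulation absorbs such atoms automatically.
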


\begin{proof}
    We simply rearrange the formula~\ref{eqn:gamma_k} and introduce an integral over $[0,1]^d$ of a constant function, which is just a multiplication by one:
    \begin{align*}
        \gamma_k^f(\Lambda)
         & = \frac{\Lambda^k}{\omega_d^{k}(k+1)!} \int_{\R^d}\int_{(\R^d)^k} f^{k+1}(x) h_1^c(0,y)e^{-\Lambda R^d(0,y)f(x)} \d y \d x                        \\
         & = \int_{\R^d}\frac{\Lambda^k}{\omega_d^{k}(k+1)!} (f(x))^{k+1} \int_{(\R^d)^k} h_1^c(0,y)e^{-\Lambda R^d(0,y)f(x)} \d y \d x                      \\
         & =\int_{\R^d}f(x) \frac{(\Lambda f(x))^k}{\omega_d^{k}(k+1)!} \int_{(\R^d)^k} h_1^c(0,y)e^{- (\Lambda f(x)) R^d(0,y)} \d y \d x                    \\
         & =\int_{\R^d}f(x) \frac{(\Lambda f(x))^k}{\omega_d^{k}(k+1)!} \int_{[0,1]^d}\int_{(\R^d)^k} h_1^c(0,y)e^{- (\Lambda f(x)) R^d(0,y)} \d y \d z \d x \\
         & =\int_{\R^d} f(x) \gamma_k^{\mathcal{U}^d}(\Lambda f(x)) \d x.
    \end{align*}
    The first formula of the theorem then follows by taking an alternating sum as in Theorem~\ref{thm:gamma_k}.

    The second formula follows from the first via the integration by parts.
    Namely, we have
    \begin{align*}
        \int_{\R^d} f(x)\overline{\chi}_{\mathcal{U}^d}(\Lambda f(x)) \d x
         & = \int_{\R^d} f(x)\overline{\chi}_{\mathcal{U}^d}(\Lambda f(x)) \d x - \overline{\chi}_{\mathcal{U}^d}(0) + 1                                                                       \\
         & = 1+\int_{\R^d} f(x) \left[\overline{\chi}_{\mathcal{U}^d}(\Lambda y)\right]_{y=0}^{y=f(x)} \d x                                                                                    \\
         & = 1+\int_{\R^d} f(x) \int_{0}^{f(x)} \Lambda \overline{\chi}'_{\mathcal{U}^d}(\Lambda y) \d y \d x                                                                                  \\
         & = 1+ \int_0^{\|f\|_\infty} \int_{\R^d} f(x)\mathbbm{1}_{f(x)\geq y} \Lambda \overline{\chi}'_{\mathcal{U}^d}(\Lambda y) \d x \d y                                                   \\
         & = 1 + \int_0^{\|f\|_\infty} \Lambda \overline{\chi}'_{\mathcal{U}^d}(\Lambda y) \hat{f}(y) \d y                                                                                     \\
         & = 1 + \left[\hat{f}(y) \overline{\chi}_{\mathcal{U}^d}(\Lambda y)\right]_{y=0}^{y=\|f\|_\infty} - \int_0^{\|f\|_\infty}\hat{f}'(y) \overline{\chi}_{\mathcal{U}^d}(\Lambda y) \d y.
    \end{align*}
    Now, we use $\hat{f}(\|f\|_\infty)=0$ and $\hat{f}(0) \overline{\chi}_{\mathcal{U}^d}(0)=1\cdot 1 =1$ to complete the proof.
\end{proof}

\begin{remark}
    A similar result for Betti numbers was presented in \cite[Theorem 1.1]{goel_strong_2019}.
\end{remark}


\begin{remark}
    If we replace Euclidean balls by more general ones with respect to some $p$-distance, Thomas's thesis \cite[Theorem 4.3.1]{thomas_stochastic_2021} provides an analogous result to Theorem~\ref{thm:gamma_k}, but with an infinite series $\overline{\chi}(t) = \sum _{k=0}^{\infty} (-1)^k \psi_k(t)$,
    where $t$ in the setting of that work relates to ours via $\Lambda = \omega_d t^d$.
    Now from parts (i) and (ii) Lemma 4.2.1 of \cite{thomas_stochastic_2021}, one can infer that $\sum_{k=0}^{\infty} \psi_k(t) \leq \exp ((ct)^d\cdot \omega_d \|f\|_\infty)<\infty$.
    Thus, one can apply Fubini's theorem to obtain Theorem~\ref{thm:density-formulas} in this more general setting as well.
\end{remark}

\begin{remark}
    Our theorem re-establishes that the EECC only depends on the excess mass transform as already found by Vishwanath et al.\cite{vishwanath_limits_2022}.
    They also provide various examples of parametric families of distributions which all have the same excess mass, as well as a theoretical study deriving criteria for such families to have this property.
    Let us only point out an elementary example, namely a constant density $f$ on a compact set $K\subsetneq \R^d$.
    Such a density has excess mass $\hat{f}(y) = \mathbbm{1}_{[1/\lambda^d(K),\infty[}(y)$, where $\lambda^d$ denotes the $d$-dimensional Lebesgue measure.
    Consequently, the EECC of such a density only depends on the measure of its support, but not on its topology.
\end{remark}

\begin{example}
    As a sanity check, we evaluate the integral transform formula for $F=\mathcal{U}^d$.
    Then, $\hat{f}(y) = \mathbbm{1}_{[0,1]}(y)$ and thus $\hat{f}'(y) = \delta(y-1)$.
    Consequently, our formula reads as
    \[
        \overline{\chi}_{\mathcal{U}^d}(\Lambda) = \int_{0}^{1}\delta(y-1) \overline{\chi}_{\mathcal{U}^d}(\Lambda y) \d y = \overline{\chi}_{\mathcal{U}^d}(\Lambda),
    \]
    which is of course tautological.
\end{example}

Expressing the EECC of an arbitrary density as an integral transform of the EECC of a uniform density has important implications for computations and theory.
First, let us state an estimate which is a stability theorem similar to \cite[Theorem 3.1]{Krebs2021OnAT} .
\begin{corollary}\label{cor:stability}
    Let $F, G$ be probability distributions on $\R^d$ admitting densities $f$ and $g$, respectively.
    Then we have  $\|\overline{\chi}_F-\overline{\chi}_G\|_\infty\leq \|\hat{f}'-\hat{g}'\|_1$.
\end{corollary}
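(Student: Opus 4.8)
The plan is to deduce the bound directly from the integral transform formula \eqref{eqn:EECC-integral-transform}, which expresses each EECC as the same linear functional applied to the respective excess mass derivative. Concretely, for every $\Lambda>0$ we have
\[
    \overline{\chi}_F(\Lambda)-\overline{\chi}_G(\Lambda)
    = -\int_0^\infty \bigl(\hat{f}'(y)-\hat{g}'(y)\bigr)\,\overline{\chi}_{\U^d}(\Lambda y)\d y,
\]
where the upper limit of integration is taken to be $\infty$ after extending both $\hat{f}'$ and $\hat{g}'$ by zero beyond their respective $L^\infty$-norms (so the common domain is $[0,\infty[$). The first step is therefore to justify this subtraction: both formulas hold by Theorem~\ref{thm:density-formulas}, and since $\hat{f}$ is supported on $[0,\|f\|_\infty]$ and $\hat{g}$ on $[0,\|g\|_\infty]$, their distributional derivatives are supported on those intervals, so extending by zero and integrating over $[0,\infty[$ changes nothing.

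The second step is the estimate itself. Taking absolute values and pulling them inside the integral gives
\[
    \bigl|\overline{\chi}_F(\Lambda)-\overline{\chi}_G(\Lambda)\bigr|
    \leq \int_0^\infty \bigl|\hat{f}'(y)-\hat{g}'(y)\bigr|\;\bigl|\overline{\chi}_{\U^d}(\Lambda y)\bigr|\d y
    \leq \|\overline{\chi}_{\U^d}\|_\infty \, \|\hat{f}'-\hat{g}'\|_1,
\]
and one finishes by observing that $\|\overline{\chi}_{\U^d}\|_\infty = 1$. This last fact is where I expect the only real work to lie: one must show that the EECC of the uniform distribution on $[0,1]^d$ is bounded in absolute value by $1$, with the maximum attained at $\Lambda=0$ where $\overline{\chi}_{\U^d}(0)=1$. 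For $d=1,2,3$ this is immediate from the explicit formulas $e^{-\Lambda}$, $e^{-\Lambda}(1-\Lambda)$, $e^{-\Lambda}(1-3\Lambda+\tfrac{3\pi^2}{32}\Lambda^2)$ listed above. For general $d$, since $\overline{\chi}_{\U^d}=e^{-\Lambda}P(\Lambda)$ with $P$ a degree-$d$ polynomial with $P(0)=1$, one would need an a priori bound; alternatively — and this is the cleaner route — one can argue probabilistically: $\overline{\chi}_{\U^d}(\Lambda)$ is a limit of $n^{-1}\E[\chi(\Cech_{r_n}(X_n))]$, and the Euler characteristic of a \v{C}ech complex built on points in a bounded region, normalized by $n$, can be controlled. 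In fact the slickest argument uses the representation $\overline{\chi}_{\U^d}(\Lambda)=1+\sum_{k=1}^d(-1)^k\gamma_k^{\U^d}(\Lambda)$ together with Euler's integral-geometric identity $\chi=\sum_{k\ge0}(-1)^kS_k$ where $S_k\ge0$ counts $k$-simplices, giving $\overline{\chi}_{\U^d}(\Lambda)\le 1+\sum_{k\ge2,\ \mathrm{even}}\gamma^{\U^d}_k(\Lambda)$; but rather than develop this machinery I would simply cite the relevant bound, since $\|\overline{\chi}_{\U^d}\|_\infty\le 1$ is essentially the statement that adding a point to a point cloud changes its Euler characteristic by at most one in expectation.

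If a self-contained proof of $\|\overline{\chi}_{\U^d}\|_\infty=1$ is wanted, the argument via Morse theory is: the critical points of the distance function to $X_n$ of index $k$ contribute $(-1)^k$ to $\chi$, and a point $x_i\in X_n$ is itself the unique index-$0$ critical point it generates, so $\chi(\Cech_{r_n}(X_n)) = n - (\text{number of higher-index critical points, counted with sign})$; dividing by $n$ and taking expectations, the leading term is $1$ and the remainder, after the change of variables that produces \eqref{eqn:gamma_k}, is a bounded expression in $\Lambda$. The obstacle — making the remainder bound uniform in $\Lambda$ — is handled by noting that each $\gamma_k^{\U^d}(\Lambda)\to 0$ as $\Lambda\to\infty$ (each is $\Lambda^k$ times a decaying exponential integral) and is continuous, hence bounded; combined with $P(0)=1$ this pins the sup at $1$. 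With that in hand, the corollary follows immediately from the two displays above.
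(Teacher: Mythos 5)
Your proof is correct and takes essentially the same approach as the paper: subtract the two instances of formula \eqref{eqn:EECC-integral-transform}, move the absolute value inside the integral, and bound $|\overline{\chi}_{\U^d}(\Lambda y)|\leq 1$. Note that the paper simply asserts this last bound without proof, exactly as you suspected it might; your attempted justifications for general $d$ (the ``adding a point changes $\chi$ by at most one in expectation'' heuristic, and the claim that continuity and boundedness of the $\gamma_k^{\U^d}$ together with $P(0)=1$ ``pin the sup at $1$'') are not actually rigorous, but beyond $d\leq 3$ the paper offers nothing more either.
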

\begin{proof} We use that $|\overline{\chi}_{\U^d}(\Lambda y)|\leq 1$ and estimate $\|\overline{\chi}_F-\overline{\chi}_G\|_\infty$ as
    \[
        \left\|\int\limits_{0}^{\infty} (\hat{g}'(y)-\hat{f}'(y)) \, \overline{\chi}_{\U^d}(\Lambda y) \d y \right\|_\infty 
        \leq \sup_\Lambda \int\limits_{0}^{\infty} |\hat{g}'(y)-\hat{f}'(y)| \, |\overline{\chi}_{\U^d}(\Lambda y)| \d y
        \leq \int\limits_{0}^{\infty} |\hat{g}'(y)-\hat{f}'(y)| \d y. \qedhere
    \]
\end{proof}

As a second consequence, we can find formulas for the EECC of probability densities which were previously intractable.
\begin{example}
    Consider the two-dimensional density \[f(x_1,x_2) = \frac{1}{2\pi}\exp\left(-\sqrt{x_1^2+x_2^2}\right).\]
    Due to the rotational symmetry of $f$, an easy application of polar coordinates shows that its excess mass has derivative $\hat{f}'\colon [0,\frac{1}{2\pi}]\to \R,\; \hat{f}'(y) = 2\pi\ln(2\pi y)$.
    Plugging this into our formula yields
    \[
        \overline{\chi}_{F}(\Lambda)
        = -\int\limits_{0}^{\frac{1}{2\pi}} 2\pi\ln(2\pi y)  \, \overline{\chi}_{\U^2}(\Lambda y) \d y
        = -\int\limits_{0}^{\frac{1}{2\pi}} 2\pi\ln(2\pi y)  \exp(-\Lambda y)(1-\Lambda y) \d y
        = \frac{2\pi\left(1-e^{-\frac{\Lambda}{2\pi}}\right)}{\Lambda}.
    \]
\end{example}
\begin{table}
    \centering
    \begin{adjustbox}{max width=1.2\textwidth,center}{
            \footnotesize{
                \begin{tabular}{M{5cm}M{10cm}}
                    f                                                                   & \overline{\chi}_F(\Lambda)                                                                                                                                                                 \\
                    \hline
                    e^{-x}                                                              & \frac{1-e^{-\Lambda}}{\Lambda}                                                                                                                                                             \\
                    \frac{1}{\sqrt{2\pi}}\exp(-x^2/2)                                   & \int\limits_0^{\frac{1}{\sqrt{2\pi}}} \frac{2\exp(-\Lambda y)}{\sqrt{-\log(2\pi y^2)}} \d y                                                                                                \\
                    \frac{1}{2\pi}\exp\left(-\frac{x_1^2+x_2^2}{2}\right)               & \exp\left(-\frac{\Lambda}{2\pi}\right)                                                                                                                                                     \\
                    \frac{1}{2\pi}\left(1+\frac{x_1^2+x_1^2}{n}\right)^{-\frac{n+2}{2}} & -\left(\frac{2\pi}{\Lambda}\right)^{\frac{n}{n+2}} \frac{n}{n+2}\left(\gamma\left(1+\frac{n}{n+2},\frac{\Lambda}{2\pi}\right)-\gamma\left(\frac{n}{n+2},\frac{\Lambda}{2\pi}\right)\right) \\
                    \frac{1}{4\pi}\exp\left(-\frac{(x_1^2+x_2^2+x_3^2)^{3/2}}{3}\right) & \frac{e^{-\Lambda / (4\pi)} (-3 \Lambda^2 \pi - 24\Lambda (-16 + \pi^2) + 32 (-1 + e^{\Lambda / (4\pi)}) \pi (-32 + 3\pi^2))}{128\Lambda}
                \end{tabular}}}
    \end{adjustbox}
    \caption{Probability densities and their expected ECCs.
    Here, $\gamma(a,x) =\int_0^x t^{a-1}e^{-t}\d t$ is the lower incomplete gamma function.
    For plots, see Figure~\ref{fig:EECC}.
    For the one-dimensional standard normal distribution, there is no solution in terms of elementary functions.
    }
    \label{tab:formulae}
\end{table}
\begin{figure}
    \centering
    \includegraphics[width=\textwidth]{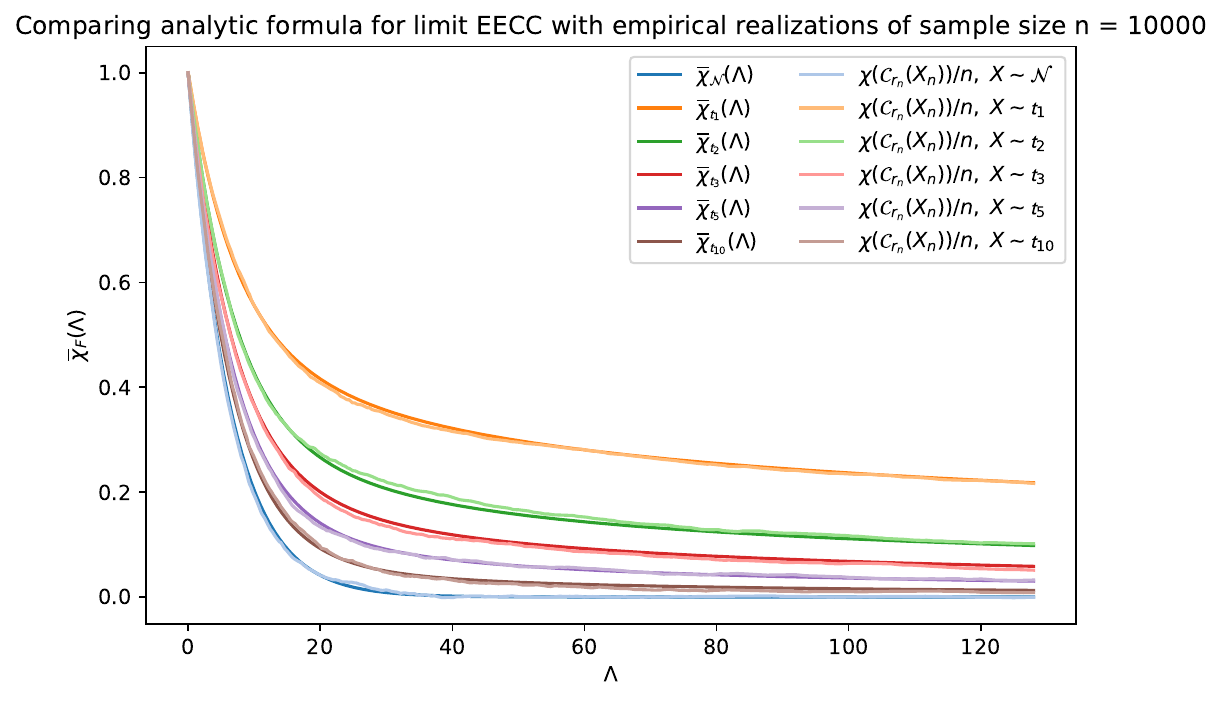}
    \caption{Comparing expected ECCs in the thermodynamic limit of two-dimensional normal and $t$-Student distributions of various degrees of freedom with their empirical counterparts. 
    For the formulas, see Table~\ref{tab:formulae}.
        \label{fig:EECC}}
\end{figure}

See Table~\ref{tab:formulae} for more results and Figure~\ref{fig:EECC} for corresponding plots; we omit the tedious, but straight forward calculus arguments deriving them.
We can observe that the values of the EECCs are strictly positive, which implies that the zeroth Betti number is always greater than the first Betti number.
Intuitively speaking, there are always more connected components than there are holes.
The behaviour is markedly different from the case of the uniform distribution, where the EECC changes signs hinting that different Betti numbers become dominant in different regimes.
The conjecture that this was a general phenomenon~\cite[Section 5.3]{bobrowskiTopologyRandomGeometric2017} is challenged by the result of our computations.
We also show realizations of ECCs from samples of size $n=10000$, which already approximate the limiting EECC quite well.

Note that the EECC of a two-dimensional standard normal distribution coincides with that of a one-dimensional uniform distribution on $[0,1/2\pi]$ (the explicit computation can be found below).
However, the excess masses are different.
If we fix the dimension $d$ this cannot happen, as we shall see next.%
\section{Uniqueness of Excess Mass}
In this section we establish a third consequence of Theorem~\ref{thm:density-formulas}, namely that the dependence on the excess mass is injective.
This is to say, for fixed amient dimension $d$, the excess mass is uniquely determined by the expected ECC in the thermodynamic limit.
In fact, we can use Theorem~\ref{thm:density-formulas} to show:
\begin{theorem}\label{thm:fibre}
    Let $F,G$ be probability distributions on $\R^d$ which admit densities
    $f,g\colon \R^d\to\R$ that are bounded.
    Suppose $\overline{\chi}_F(\Lambda) =\overline{\chi}_G(\Lambda)$ for all $\Lambda \geq 0$ and is $d$ times differentiable in $0$.
    Then $\hat{f} = \hat{g}$.
\end{theorem}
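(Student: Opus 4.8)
The plan is to push everything through the integral transform of Theorem~\ref{thm:density-formulas} and reduce the statement to injectivity of the Laplace transform. Let $\mu$ and $\nu$ be the laws of $f(X)$ and $g(Y)$ for $X\sim F$, $Y\sim G$; by the remark following Definition~\ref{def:ExcessMass} their distribution functions are $1-\hat f$ and $1-\hat g$, and each is a probability measure supported on a compact interval, $[0,\|f\|_\infty]$ resp.\ $[0,\|g\|_\infty]$. Since $\int_{\R^d}\phi(f(x))f(x)\,\d x=\int\phi(y)\,\d\mu(y)$, the first formula of Theorem~\ref{thm:density-formulas} reads $\overline{\chi}_F(\Lambda)=\int\overline{\chi}_{\U^d}(\Lambda y)\,\d\mu(y)$, and likewise for $G$. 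So the hypothesis $\overline{\chi}_F\equiv\overline{\chi}_G$ becomes
\[
    \int_0^\infty \overline{\chi}_{\U^d}(\Lambda y)\,\d\tau(y)=0 \qquad\text{for all }\Lambda\ge 0,
\]
where $\tau:=\mu-\nu$ is a compactly supported finite signed measure with $\tau(\R)=0$.

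Next I would unfold the kernel. Write $\overline{\chi}_{\U^d}(\Lambda)=e^{-\Lambda}P(\Lambda)$ with $P(\Lambda)=\sum_{i=0}^d p_i\Lambda^i$, $p_0=1$, and set $T(\Lambda):=\int_0^\infty e^{-\Lambda y}\,\d\tau(y)$. As the integration runs over a compact set against a finite measure, $T$ is entire, $|T(\Lambda)|\le|\tau|(\R)$ for every $\Lambda\ge 0$, and $T(0)=\tau(\R)=0$. Differentiating under the integral gives $\int y^i e^{-\Lambda y}\,\d\tau(y)=(-1)^iT^{(i)}(\Lambda)$, so the displayed identity turns into $\sum_{i=0}^d(-1)^ip_i\Lambda^iT^{(i)}(\Lambda)=0$. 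Using the Euler operator $\theta=\Lambda\frac{\d}{\d\Lambda}$ and the classical identity $\Lambda^i\frac{\d^i}{\d\Lambda^i}=\theta(\theta-1)\cdots(\theta-i+1)$, this is the equidimensional (Euler) equation $Q(\theta)T=0$, where $Q(r)=\sum_{i=0}^d(-1)^ip_i\,r(r-1)\cdots(r-i+1)$ is a polynomial with $Q(0)=p_0=1\ne 0$.

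The conclusion is then read off from the structure of Euler equations. Expanding the entire function $T(\Lambda)=\sum_{s\ge 0}a_s\Lambda^s$ and using $Q(\theta)\Lambda^s=Q(s)\Lambda^s$, the $s$-th Taylor coefficient of $Q(\theta)T$ is $Q(s)a_s$; since $Q(\theta)T$ vanishes on $[0,\infty)$ and hence (by analyticity) everywhere, this forces $a_s=0$ whenever $Q(s)\ne 0$. As $Q$ is not the zero polynomial it has finitely many roots, so $T$ is a polynomial; a polynomial bounded on $[0,\infty)$ is constant, so $T\equiv T(0)=0$. Therefore the Laplace transform of $\tau$ vanishes identically, whence $\tau=0$ by injectivity of the Laplace transform of a finite signed measure; that is, $\mu=\nu$, which is exactly $\hat f=\hat g$.

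The one genuinely nontrivial point — the main obstacle — is the observation that after the substitution the equation is of Euler type with $Q(0)\ne0$, so its only entire solutions are polynomials spanned by the non-negative integer roots of $Q$; combined with the boundedness of $T$ on the half-line (which is free from the compact support of $\tau$) and the normalization $T(0)=0$, this eliminates every solution without one ever having to locate the roots of $Q$. I note that the hypothesis that $\overline{\chi}_F$ be $d$ times differentiable at the origin is in fact automatic, since Theorem~\ref{thm:density-formulas} already presents $\overline{\chi}_F$ as an entire function; it is relevant only for the alternative, more hands-on route that matches Taylor coefficients at $0$ to obtain equality of all moments of $\mu$ and $\nu$ — after checking $Q(n)\ne0$ for all $n\in\N$ from the explicit coefficients $p_i$ — and then appeals to determinacy of the moment problem on a bounded interval.
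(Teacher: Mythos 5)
Your proof is correct, and although it departs from the same formula as the paper --- Theorem~\ref{thm:density-formulas} rewritten as $\sum_{i=0}^d(-1)^ip_i\Lambda^i\tfrac{d^i}{d\Lambda^i}\Lap{\hat f'}=-\overline{\chi}_F$ --- it resolves the crucial uniqueness step by a genuinely different route. The paper treats this as an inhomogeneous linear ODE, supplies initial data at $\Lambda=0$ via Lemma~\ref{lem:InitialValues}, and invokes Picard--Lindel\"of. That step is delicate: the leading coefficient $(-1)^dp_d\Lambda^d$ vanishes at $\Lambda=0$, so the equation is singular precisely where the initial conditions are posed, and the standard Picard--Lindel\"of theorem does not apply there. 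You instead pass to the difference, i.e.\ to the Laplace transform $T$ of the compactly supported signed measure $\tau=\mu-\nu$ (which, up to sign, is $\Lap{\hat f'}-\Lap{\hat g'}$), recognize the homogeneous equation as an Euler equation $Q(\theta)T=0$ with $Q(0)=p_0=1$, and exploit that $T$ is entire: matching Taylor coefficients gives $Q(s)a_s=0$, so $T$ is a polynomial, and boundedness on $[0,\infty)$ together with $T(0)=\tau(\R)=0$ forces $T\equiv0$, after which injectivity of the Laplace transform finishes the argument exactly as in the paper. The power-series (Frobenius-type) treatment is the right tool at this regular singular point and quietly repairs the weak spot in the published argument; it also makes the initial-value lemma unnecessary and, as you observe, shows the hypothesis of $d$-fold differentiability at $0$ to be automatic. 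What the paper's formulation buys in exchange is the explicit initial data of Lemma~\ref{lem:InitialValues} (the moments $\|f\|_{k+1}^{k+1}$), which feed into the explicit inversion formulas for $d=1,2$; your argument, being a pure uniqueness statement for the homogeneous problem, does not produce these.
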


Our strategy is to rewrite equation~\ref{eqn:EECC-integral-transform} as an ODE which both Laplace transforms $\Lap{\hat{f}'}$ and $\Lap{\hat{g}'}$ solve.
Indeed, as $\overline{\chi}_{\mathcal{U}^d} = e^{-\Lambda} P(\Lambda)$ for a certain polynomial $P(\Lambda) = \sum_{i=0}^d p_i \Lambda^i$, formula~\ref{eqn:EECC-integral-transform} can be rewritten as
\begin{align*}
    -\overline{\chi}_F(\Lambda) & = \sum_{i=0}^d p_i \Lambda^i \Lap{\hat{f}'(y)y^i}(\Lambda)                        \\
                                & =\sum_{i=0}^d (-1)^i p_i \Lambda^i\frac{d^i}{d\Lambda^i} \Lap{\hat{f}'}(\Lambda),
\end{align*}
using properties of the Laplace transform; see \cite[chapter 7]{beals_advanced_1973} for a textbook introduction.
Then, we will infer that $\hat{f}=\hat{g}$ from the uniqueness of the solution.
In order to carry this idea out, we now derive initial values which only depend on $\overline{\chi}_F=\overline{\chi}_G$ and the ambient dimension.

\begin{lemma}\label{lem:InitialValues}
    \begin{equation}\label{eqn:InitialValues}
        \frac{d^k}{d\Lambda^k} \Lap{\hat{f}'(y)}(0) = (-1)^{k-1}\frac{\overline{\chi}^{(k)}_F(0)}{\sum \limits_{i=0}^k \binom{k}{i} (-1)^i P^{(k-i)}(0)}
    \end{equation}
\end{lemma}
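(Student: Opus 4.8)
The plan is to derive equation~\eqref{eqn:InitialValues} by repeatedly differentiating the identity
\[
    -\overline{\chi}_F(\Lambda) = \int_0^{\|f\|_\infty} \hat{f}'(y)\, e^{-\Lambda y} P(\Lambda y) \d y = \sum_{i=0}^d p_i \Lambda^i \Lap{\hat f'(y) y^i}(\Lambda)
\]
and evaluating at $\Lambda = 0$. Recall that for a (tempered distributional) density $\mu$ one has $\Lap{y^i \mu}(\Lambda) = (-1)^i \frac{d^i}{d\Lambda^i}\Lap{\mu}(\Lambda)$, so that $-\overline{\chi}_F(\Lambda) = \sum_{i=0}^d (-1)^i p_i \Lambda^i L^{(i)}(\Lambda)$ where I abbreviate $L(\Lambda) := \Lap{\hat f'}(\Lambda)$. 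Equivalently, writing $P(\Lambda) = \sum_i p_i \Lambda^i$, the right-hand side is exactly the operator $P$ applied to $L$ with each power $\Lambda^i$ paired with the $i$-th derivative; but more cleanly, I would note that the whole right side equals the Laplace transform $\Lap{\hat f'(y) P(\Lambda y)}$ is not a function of $\Lambda$ alone in a naive way — so instead I will just work directly from the display $-\overline{\chi}_F(\Lambda) = \sum_{i=0}^d (-1)^i p_i \Lambda^i L^{(i)}(\Lambda)$ printed in the paper.

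The key computation is: apply $\frac{d^k}{d\Lambda^k}$ to both sides and set $\Lambda = 0$. On the left we get $-\overline{\chi}_F^{(k)}(0)$. On the right, each summand is $(-1)^i p_i \,\Lambda^i L^{(i)}(\Lambda)$; by the Leibniz rule, $\frac{d^k}{d\Lambda^k}\big(\Lambda^i L^{(i)}(\Lambda)\big) = \sum_{j=0}^k \binom{k}{j} \big(\frac{d^j}{d\Lambda^j}\Lambda^i\big) L^{(i+k-j)}(\Lambda)$, and at $\Lambda = 0$ the factor $\frac{d^j}{d\Lambda^j}\Lambda^i\big|_{\Lambda=0}$ vanishes unless $j = i$, in which case it equals $i!$. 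Hence only the term $j = i$ survives, giving $\binom{k}{i} i!\, L^{(k)}(0)$ — note this requires $i \le k$, so for $k \le d$ the surviving indices are $i = 0, \dots, k$ (and for $i > k$ the whole summand is $O(\Lambda^{i-k})$ near $0$, vanishing). Therefore
\[
    -\overline{\chi}_F^{(k)}(0) = \sum_{i=0}^k (-1)^i p_i \binom{k}{i} i!\, L^{(k)}(0) = L^{(k)}(0) \sum_{i=0}^k (-1)^i \binom{k}{i} i!\, p_i.
\]
Finally I identify the bracketed sum with the denominator in~\eqref{eqn:InitialValues}: since $P^{(k-i)}(0) = (k-i)!\, p_{k-i}$ (the coefficient extraction for a polynomial), reindexing $i \mapsto k-i$ turns $\sum_i \binom{k}{i}(-1)^i P^{(k-i)}(0)$ into $\sum_i \binom{k}{i}(-1)^{k-i}(k-i)!\,p_{k-i}$... wait, the sign; $\sum_{i=0}^k \binom{k}{i}(-1)^i P^{(k-i)}(0) = \sum_{j=0}^k \binom{k}{j}(-1)^{k-j} j!\, p_j = (-1)^k \sum_j \binom{k}{j}(-1)^{-j} j! p_j = (-1)^k \sum_j (-1)^j \binom{k}{j} j! p_j$. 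So the bracket above equals $(-1)^k \cdot \big(\text{denominator of~\eqref{eqn:InitialValues}}\big)^{-1}$... let me just say: combining, $L^{(k)}(0) = -\overline{\chi}_F^{(k)}(0) \big/ \big(\sum_{i=0}^k (-1)^i \binom{k}{i} i!\, p_i\big)$, and matching signs with $\sum_{i=0}^k \binom{k}{i}(-1)^i P^{(k-i)}(0) = (-1)^k \sum_{i=0}^k(-1)^i\binom{k}{i} i! p_i$ yields $L^{(k)}(0) = (-1)^{k-1}\overline{\chi}_F^{(k)}(0)\big/\big(\sum_{i=0}^k \binom{k}{i}(-1)^i P^{(k-i)}(0)\big)$, which is~\eqref{eqn:InitialValues}.

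The main obstacle I anticipate is not the algebra — which is the Leibniz-rule bookkeeping sketched above — but the analytic justification that one may differentiate $L(\Lambda) = \Lap{\hat f'}(\Lambda)$ arbitrarily often up to order $d$ and evaluate at the boundary point $\Lambda = 0$, given that $\hat f'$ is only a distribution (it can contain Dirac masses, as in the uniform example) and that the Laplace transform is a priori only defined for $\Lambda > 0$. I would handle this by invoking the hypothesis that $\overline{\chi}_F$ is $d$ times differentiable at $0$ together with the fact that $\hat f$ is supported on the compact interval $[0,\|f\|_\infty]$, so $\hat f'$ is a compactly supported distribution and $L$ extends to an entire function of $\Lambda$; then $L^{(k)}(0) = \langle \hat f', (-y)^k\rangle$ is well-defined, the interchange of $\frac{d^k}{d\Lambda^k}$ with the pairing is legitimate, and the one-sided derivative $\overline{\chi}_F^{(k)}(0)$ appearing on the left is exactly what the hypothesis supplies. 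A small additional remark: the denominator must be shown nonzero, which follows because its leading behaviour as a function of the $p_i$ is dominated (for the relevant $k \le d$) by... in fact one checks directly from the uniform-distribution case — where $L = \Lap{\delta(y-1)} = e^{-\Lambda}$ and $-\overline{\chi}_{\U^d}^{(k)}(0)$ is finite and the formula must hold — that the denominator equals $(-1)^k$ times a nonzero constant; alternatively one notes it is never zero since otherwise the uniform distribution itself would violate the identity.
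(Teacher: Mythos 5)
Your proof is correct and takes essentially the same route as the paper: both derive the formula by differentiating the integral-transform identity $k$ times via the Leibniz rule and evaluating at $\Lambda=0$, the only (cosmetic) difference being that you apply Leibniz to $\Lambda^i\,\frac{d^i}{d\Lambda^i}\Lap{\hat f'}$ in the ODE form while the paper applies it to $e^{-\Lambda y}P(\Lambda y)$ under the integral sign, and your sign bookkeeping (denominator $=(-1)^k\sum_i(-1)^i\binom{k}{i}i!\,p_i$) checks out. Your justification that $\hat f'$ is compactly supported, so $\Lap{\hat f'}$ is entire and the evaluation at the boundary point $\Lambda=0$ is legitimate, is if anything slightly more careful than the paper's.
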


\begin{proof}
    First, we note that the integrand in equation~\ref{eqn:EECC-integral-transform} is continuously differentiable with respect to $\Lambda$,
    whence an application of differentiation under the integral sign yields
    \[
        \overline{\chi}^{(k)}_F(\Lambda) = - \int_\R y^k \hat{f}'(y)e^{-\Lambda y} \sum \limits_{i=0}^k \binom{k}{i} (-1)^i P^{(k-i)}(\Lambda y) \d y.
    \]
    Here, we used the general product formula for
    \[
        \frac{d^k}{d\Lambda^k}(P(\Lambda y) e^{-\Lambda y})
        = \sum_{i=0}^k \binom{k}{i} y^{k-i} P^{(k-i)}(\Lambda y) (-y)^{i}r^{-\Lambda y}
        = y^k e^{-\Lambda y} \sum_{i=0}^k \binom{k}{i} (-1)^{i}P^{(k-i)}(\Lambda y).
    \]
    On the other hand, derivatives of the Laplace transform have the following form:
    \[
        \frac{d^k}{d\Lambda^k}\Lap{\hat{f}'(y)}(\Lambda) = (-1)^k \Lap{y^k \hat{f}'(y)}(\Lambda) = (-1)^k\int_{\R} y^k \hat{f}'(y) e^{-\Lambda y} \d y.
    \]
    Our desired assertion now follows from plugging in $\Lambda = 0$:
    \begin{align*}
        \overline{\chi}^{(k)}_F(0) & = -  \sum \limits_{i=0}^k \binom{k}{i} (-1)^i P^{(k-i)}(0)\int_\R y^k \hat{f}'(y) \d y                        \\
                                   & =(-1)^{k-1}  \sum \limits_{i=0}^k \binom{k}{i} (-1)^i P^{(k-i)}(0)\frac{d^k}{d\Lambda^k}\Lap{\hat{f}'(y)}(0).
    \end{align*}
    Note that we can do this although $\overline{\chi}$ is only defined for $\Lambda\geq 0$ (which means that the derivative is only right-sided) because the right-hand side of equation~\ref{eqn:EECC-integral-transform} is also defined for $\Lambda<0$ and continuously differentiable in $0$.
\end{proof}
\begin{remark}
    It is not hard (employing integration by parts like before) to compute the expression arising in the proof: $\int_\R y^k \hat{f}'(y) \d y = \|f\|_{k+1}^{k+1}$.
    This can be used to derive the bounds $|\frac{d^k}{d\Lambda^k}\Lap{\hat{f}'(y)}(\Lambda)| \leq \|f\|_{k+1}^{k+1}$ and evaluate
    $\frac{d^k}{d\Lambda^k}\Lap{\hat{f}'(y)}(0) = \|f\|_{k+1}^{k+1}$, but we shall not need this result here.
\end{remark}


\begin{proof}[Proof of Theorem~\ref{thm:fibre}]
    Recall that the we can rewrite equation (\ref{eqn:EECC-integral-transform}) from Theorem~\ref{thm:density-formulas} in terms of the Laplace transform as  the following linear ODE:
    \begin{equation}\label{eqn:ODE}
        -\overline{\chi}_F = \sum_{i=0}^d (-1)^i p_i \Lambda^i \frac{d^i}{d\Lambda^i} \Lap{\hat{f}'}.
    \end{equation}
    Here, $P(\Lambda) = \sum_{i=0}^d p_i \Lambda^i$ is the polynomial defined by $\overline{\chi}_{\mathcal{U}^d}(\Lambda) = e^{-\Lambda} P(\Lambda)$.

    Moreover, Lemma~\ref{lem:InitialValues} provides initial values in Equation (\ref{eqn:InitialValues}).
    As $d$ is fixed, so are the coefficients $p_i$ and because $p_0=1$, they are not all zero.
    Therefore, on every compact interval, Picard-Lindel\"of guarantees that $\Lap{\hat{f}'}$ is the unique solution.

    Finally, if $\overline{\chi}_F(\Lambda) =\overline{\chi}_G(\Lambda)$ for all $\Lambda>0$ as in the assumption of Theorem~\ref{thm:fibre},  $\Lap{\hat{f}'}$ and $\Lap{\hat{g}'}$ both satisfy the ODE~\ref{eqn:ODE}.
    In addition, they have the same initial values, given in Equation~\ref{eqn:InitialValues}, which only depend on $\overline{\chi}_F(\Lambda) =\overline{\chi}_G(\Lambda)$ and the ambient dimension.
    Consequently, we infer that $\Lap{\hat{f}'}=\Lap{\hat{g}'}$.
    By injectivity of the Laplace transform, this means $\hat{f}' = \hat{g}'$.
    Now, since $\hat{f}(0)=1=\hat{g}(0)$ because $f$ and $g$ are probability densities, we conclude that $\hat{f} = \hat{g}$, as desired.
\end{proof}

For $d=1,2$, one can write down quite explicit solutions:
In the one-dimensional case, $-\overline{\chi}_F = \Lap{\hat{f}'}$,
so that $\hat{f}(y) = 1-\int_0^y \mathcal{L}^{-1}\{\overline{\chi}_F\}(t) \d t$.
In the two-dimensional case, our differential equation simplifies to
\[
    -\overline{\chi}_F = \frac{d}{d\Lambda}\left(\Lambda \Lap{\hat{f}'}(\Lambda)\right),
\]
and therefore,
\[
    \hat{f} =  \mathcal{L}^{-1} \left\{\frac{1}{s}-\frac{1}{s^2}\int_{0}^{s} \overline{\chi}_F(\Lambda) \d \Lambda \right\}.
\]
While one might like to use these ideas to estimate $\hat{f}$ from empirical estimates of the EECC, this is unfortunately impossible in practice.
The usual Fixed Talbot algorithm \cite{Abate_Laplace_2004} for numerically computing inverse Laplace transforms is numerically quite unstable and cannot handle noisy input data one encounters in empirical EECCs.

However, let us explicitly work out an inverse problem where the EECC is explicitly given.
\begin{example}
    Suppose we are looking for a probability distribution $F$ in $\R^2$ with EECC $\overline{\chi}_F(\Lambda) = e^{-\Lambda}$.
    I.e., the EECC coincides with the one of a uniform distribution on the (one-dimensional) unit interval.
    We obtain
    \begin{align*}
        \hat{f}(y) &= - \mathcal{L}^{-1}\left\{\frac{1}{s} - \frac{1-e^{-s}}{s^2}\right\}(y)\\
         & = (1-y) \mathbbm{1}_{[0,1]}(y).        
    \end{align*}
    To identify a representative with a given excess mass, let us look for one which is radially symmetric around 0, i.e. we can write $f(x_1,x_2) = \rho(x_1^2 + x_2^2)$.
    A straight-forward application of polar coordinates shows that
    \[
        \hat{f}(y) = \pi(\Rho(\rho^{-1}(y))-\Rho(0)),
    \]
    where $\Rho$ is an antiderivative of $\rho$.
    Combining this with our previous information, and taking derivatives on the interval $(0,1)$, we get
    \[
        -1 = \hat{f}'(y) = \frac{\pi y}{\rho'(\rho^{-1}(y))} = \pi y (\rho^{-1}(y))'.
    \]
    Separating the variables yields $\rho(y) = e^{-\pi (y-C)}$.
    The constant $C$ needs to be such that $f$ becomes a probability density, which here means $C=0$, leading us to the solution of the inverse problem as
    \[
        f(x_1,x_2) = e^{-\pi (x_1^2+x_2^2)}.
    \]
\end{example}

\begin{remark}
    If one replaces the Euclidean metric by the supremum distance for the collection of balls, \cite[Corollary 4.3.3]{thomas_stochastic_2021} presents the following expression for the EECC of the uniform distribution \cite[eqn. (4.11)]{thomas_stochastic_2021}:
    \[
        \overline{\chi}_{\mathcal{U}^d}(\Lambda) = -\frac{e^{-\Lambda/\omega_d}}{\Lambda/\omega^d}T_d(-\Lambda/\omega_d).
    \]
    Here, $T_p$ is the Touchard polynomial of degree $p$.
    Now, using the variable $\lambda = \Lambda/\omega_d$, one can argue with the Laplace transform again to establish an analogue to Theorem~\ref{thm:fibre}.
\end{remark}%
\section{Outlook}
To conclude this paper, we outline two major directions for future research.

First, having established a necessary condition for the expected ECCs to coincide raises the question whether this condition is also necessary in order for the centered ECCs to coincide in distribution (Vishwanath et al. \cite{vishwanath_limits_2022} showed it to be sufficient).
To this end, it is tempting to try a similar approach for higher moments, starting from variance.
While an analogue of Theorem~\ref{thm:density-formulas} is readily established using the description of $\lim_{n\to\infty} n^{-1}\Var{\chi_F(\Lambda)}$ of \cite{bobrowski_algebraic_2012}, the strategy to prove Theorem~\ref{thm:fibre} cannot be replicated.
This is because, unfortunately, there is no analogous expression to $\overline{\chi}_{\mathcal{U}^d} = e^{-\Lambda} P(\Lambda)$, for a certain polynomial $P(\Lambda) = \sum_{i=0}^d p_i \Lambda^i$.

Second, it would be interesting to have a quantitative version of Theorem~\ref{thm:fibre} in the following sense:
Is it possible to compute (or at least bound) the supremum distance $\|\hat{f}-\hat{g}\|_\infty$ in terms of expected ECCs?
Recall that $1-\hat{f}$ is the cumulative distribution function of the random variable $f(X)$ where $X\sim F$.
Thus, $\|\hat{f}-\hat{g}\|_\infty$ is a Kolmogorov-Smirnov test statistic for the null hypothesis $f(X)\overset{D}{=}g(Y)$, where $X\sim F$, $Y\sim G$.
This could pave the way towards a distribution-free multivariate two sample test using computational topology.
Moreover, such a result would imply that the injective continuous map $\hat{f}' \mapsto \int_0^\infty \hat{f}'(y) \overline{\chi}_{\mathcal{U}}(\Lambda y) \d y$ is in addition a homeomorphism onto its image.%

\section*{Acknowledgements}
This work was initiated while NH was visiting Helmholtz Munich. He gratefully acknowledges the hospitality of his host Bastian Rieck as well as financial support from the University of Warsaw via the IDUB program, area POB 3.
NH was supported by the Dioscuri program initiated by the Max Planck Society,  jointly managed with the National Science Centre (Poland), and mutually funded by the Polish Ministry of Science and Higher Education and the German Federal Ministry of Education and Research. 
We thank Paweł Dłotko, Lennart Ronge and Rafał Topolnicki for helpful comments.

\printbibliography

\end{document}